\newtheorem{theorem}{Theorem}
  \newtheorem{lemma}[theorem]{Lemma}
   \newtheorem{corollary}[theorem]{Corollary}
  \newtheorem{lemma-def}[theorem]{Lemma/Definition}
  \theoremstyle{definition}
   \newtheorem{remark}[theorem]{Remark}
\newcommand{\kk}{\mathbf k}
\newcommand{\CC}{\mathbb C}
\newcommand{\RR}{\mathbb R}
\newcommand{\QQ}{\mathbb Q}
\newcommand{\ZZ}{\mathbb Z}
\DeclareMathOperator{\spann}{span_\QQ}
\DeclareMathOperator{\conv}{conv}
\date{\today}
 \title[Do most polynomials generate a prime ideal?]{Do most polynomials generate a prime ideal?}
  \author{Josephine Yu}
 \address{School of Mathematics, Georgia Institute of Technology, Atlanta GA 30332, USA}
 \email {jyu@math.gatech.edu}
 \thanks {\emph {2010 Mathematics Subject Classification:} 13F20,14M25,14T05} 
\thanks{\emph{Keywords:} sparse polynomials, prime ideals, Newton polytopes, tropical geometry}
\date{\today}
\begin{document}

\begin{abstract}
For which monomial supports do most polynomials generate a prime ideal?  We give necessary and sufficient conditions for the radical of the ideal to be prime over an algebraically closed field.  In characteristic zero, the same conditions give primeness.  As an application we show that under the same combinatorial conditions on Newton polytopes, the stable intersection of tropical hypersurfaces is connected through codimension one.
 \end{abstract}

 \maketitle

\section{Introduction}

% Are \emph{most} polynomials with a given support irreducible? 

% For polynomials in one variable with \emph{dense support} $\{1,x,\dots,x^n\}$, the following are well known:
% \begin{itemize}
% \item Over a finite field with $q$ elements, the number of irreducible polynomials of degree $n$ is $
% \frac{1}{n} \sum_{d|n} \mu\left(\frac{n}{d}\right) q^d
% $ where $\mu$ is the M\"obius function.  For large $q$, approximately one in $n$ polynomials of degree $n$ are irreducible.
% \item Over $\QQ$, as the \emph{height} of polynomials goes of $\infty$, the proportion of degree $n$ polynomials that are irreducible goes to $1$, so most polynomials are irreducible.
% \item Over an algebraically closed field, no polynomials of degree $> 1$ is irreducible.
% \end{itemize}
% For the first two cases, I do not know of analogous precise statements for polynomials with \emph{sparse support} (sometimes called \emph{lacunary polynomials}) other than that there are infinitely many irreducible ones over $\QQ$ by Hilbert's Irreducibility Theorem. 
% %For instance, are \emph{most} polynomials of the form $a x^n + bx + c$ irreducible over $\QQ$?  This seems to be a difficult question, and we will not answer it here\footnote{I am very interested in hearing about any results in this direction.}.

% The goal of this paper is to consider a similar question for multivariate polynomials: 
Let us fix monomial supports for some polynomials and ask: Do the polynomials generate a proper prime ideal in a polynomial ring for {\em most} choices of coefficients, over an algebraically closed field?
 
The answer is not always yes.  Here are some simple examples to illustrate what can go wrong.
\begin{enumerate}
\item[(1)] The polynomials may contain a non-trivial monomial factor; e.g.\ every polynomial of the form $a x+ bx^2 + cxy \in \CC[x,y]$ is reducible. Thus it is natural for us to work in the ring of Laurent polynomials instead of the usual polynomial ring.
\item[(2)] There may be more polynomials than variables; e.g.\ for most choices of coefficients the polynomials $ax+by+c, dx+ey+f, gx+hy+i \in \CC[x^{\pm 1},y^{\pm 1}]$ generate the unit ideal.
\item[(2')] We may be in case (2) after a monomial change of coordinates; e.g.\
 for most choices of coefficients the polynomials $axz+byw+c, dxz+eyw+f, gxz+hyw+i \in \CC[x^{\pm 1},y^{\pm 1},z^{\pm 1},w^{\pm 1}]$ generate the unit ideal, although the number of polynomials is less than the number of variables.
\item[(3)] The polynomials may form a zero-dimensional system of degree $> 1$; e.g.\ for most choices of coefficients the polynomials $ax + by + c, dx + ey + fxy \in \CC[x^{\pm 1},y^{\pm 1}]$ generate a zero-dimensional ideal of degree $2$, which is not prime.
\item[(3')] We may be in case (3) after a monomial change of coordinates; e.g. for most choices of coefficients, the ideal generated by $axz + by + c, dxz + ey + fxyz \in \CC[x^{\pm 1},y^{\pm 1}, z^{\pm 1}]$ is not prime, although it is not zero-dimensional.
\item[(4)] A subset of the polynomials may be in case (2') or (3'); e.g.\ for most choices of coefficients, the ideal generated by $axz + by + c, dxz + ey + fxyz, x+y+z+w \in \CC[x^{\pm 1},y^{\pm 1}, z^{\pm 1}, w^{\pm 1}]$ is not prime.
\end{enumerate}

Our main result, Theorem~\ref{thm:main}, gives a precise combinatorial characterization of these obstructions and shows that these are in fact the only ones for irreducibility, and even primeness in characteristic zero.

\medskip

Sufficient conditions for primeness in characteristic $p > 0$ are less tidy and we do not attempt to list them all here.  Here are some obstructions:
\begin{enumerate}
\item[(5)] Monomials in one of the polynomials may all be $p$-th powers; e.g.\ the ideal generated by $a x^p + b y^pz^{2p} + c$ is not prime over characteristic $p > 0$ for general coefficients $a,b,c$.
\item[(5')] We may be in the previous case after some cancellation; e.g.\ the ideal generated by $a x^p + b y^p + cz$ and $d y^p + e w^p + f z$ is not prime over characteristic $p>0$ for general coefficients $a,b,c,d,e,f$. 
\end{enumerate}

We state and prove the main result in the next section and discuss an application to tropical geometry in Section~\ref{sec:tropical}.
\medskip

\section{Main Theorem}
\label{sec:main}

Let $\kk$ be an algebraically closed field and $R = \kk[x_1^{\pm 1},\dots, x_n^{\pm 1}]$ be the ring of Laurent polynomials in $n$ variables over $\kk$.  
Let $k \leq n$, and fix $A_1, \dots, A_k \subset \ZZ^n$. Consider the space $(\kk^*)^{A_1} \times \cdots \times (\kk^*)^{A_k}$ of polynomials $(f_1,\dots,f_k)$ whose supports are $A_1,\dots, A_k$ respectively.  We say that a property holds for \emph{general} elements in $(\kk^*)^{A_1} \times \cdots \times (\kk^*)^{A_n}$ if it holds on a Zariski open dense subset.  

%The main question is: For which $A_1,\dots,A_k$, do generic polynomials $f_1,\dots,f_k$ generate a proper prime ideal in $R$?

We work in the Laurent polynomial ring to disregard monomial factors from the polynomials.  
By translating each $A_j$ by any of its members, and equivalently factoring out a Laurent monomial from $f_j$, if necessary, we will assume for the rest of the paper that $0 \in A_j$ for all $j \in [k] := \{1,\dots,k\}$.

Let us first ask: When do general polynomials generate a proper ideal?
\begin{lemma}
\label{lem:proper}
Suppose $1 \leq k \leq n$, $A_1,\dots,A_k \subset \ZZ^n$ and $0 \in A_j$ for all $j$. General polynomials $f_1,\dots,f_k$ with supports $A_1,\dots,A_k$ respectively generate a proper ideal in the Laurent polynomial ring $R$ if and only if one, hence both, of the following equivalent conditions hold.
\begin{enumerate}
\item For every non-empty subset $J \subseteq [k]$, $\dim \spann \bigcup_{j \in J} A_j \geq |J|$.
\item There exist $u_1 \in A_1, \dots, u_k \in A_k$ that are linearly independent.
\end{enumerate}
\end{lemma}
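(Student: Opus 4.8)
The plan is to first dispense with the easy equivalence of the two combinatorial conditions, then prove the main equivalence with generating a proper ideal.

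The equivalence of (1) and (2) is a matroid/transversal statement: condition (2) asks for a system of distinct representatives for the sets $A_1,\dots,A_k$ that happens to be linearly independent, i.e.\ an independent transversal of the family $(A_j)_{j\in[k]}$ in the linear matroid on $\QQ^n$. By the Rado--Hall theorem on independent transversals, such a system exists if and only if for every $J\subseteq[k]$ the rank of $\bigcup_{j\in J}A_j$ is at least $|J|$, which is exactly condition (1). I would state this as a one-line appeal to Rado's theorem, or alternatively give a direct matroid-union argument; this part is routine and I do not expect it to be the obstacle.

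For the main equivalence I would argue both directions by contrapositive. Suppose condition (1) fails, so there is a nonempty $J\subseteq[k]$ with $\dim\spann\bigcup_{j\in J}A_j = d < |J|$. After a monomial change of coordinates (a $\GL_n(\ZZ)$ transformation of the exponent lattice), we may assume $\bigcup_{j\in J}A_j\subseteq \spann(e_1,\dots,e_d)$, so each $f_j$ with $j\in J$ lies in the sub-Laurent-ring $S=\kk[x_1^{\pm1},\dots,x_d^{\pm1}]$. These are $|J| > d$ Laurent polynomials in $d$ variables; since each has a nonzero constant term (as $0\in A_j$), generically they cut out the empty set in the $d$-torus (Bernstein--Kushnirenko, or simply an elementary dimension count on the incidence variety), hence generate the unit ideal in $S$, hence in $R$. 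That handles the ``only if'' direction.

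For the converse, assume condition (2) holds and pick linearly independent $u_1\in A_1,\dots,u_k\in A_k$; extend to a lattice basis and change coordinates so that $u_j = e_j$. Then each $f_j$ contains the monomial $c_j x_j$ with $c_j\ne 0$, plus the constant term, plus other terms. I would show the ideal $(f_1,\dots,f_k)$ is proper for general coefficients by exhibiting a point of $(\kk^*)^n$ on the common zero locus, or equivalently by showing the projection of the incidence variety $\{(x,f)\colon f_j(x)=0\}$ onto the coefficient space is dominant. The cleanest route: fix generic values $x_{k+1},\dots,x_n\in\kk^*$; then each $f_j=0$ becomes, for generic coefficients, an equation in $x_1,\dots,x_k$ which is linear in $x_j$ with the coefficient of $x_j$ being $c_j\ne0$ plus possibly monomials in the other $x_i$'s --- not obviously triangular, so instead I would simply invoke that a system of $k$ Laurent polynomials in $n\ge k$ variables each containing a variable to the first power and a nonzero constant has nonempty zero locus in $(\kk^*)^n$ for generic coefficients, proved by descending induction on $n-k$: when $n=k$ use that the mixed volume is positive (each Newton polytope contains the segment $[0,e_j]$, so the polytopes have $k$ linearly independent edge directions and their Minkowski sum is full-dimensional, giving positive mixed volume and hence a nonempty zero set by Bernstein's theorem), and when $n>k$ specialize one free variable generically and recurse. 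This shows $(f_1,\dots,f_k)R\ne R$, completing the proof.

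The main obstacle is the converse direction: one must rule out that generic cancellation forces the unit ideal even when the $A_j$ admit an independent transversal. The key insight making it work is that the constant term $0\in A_j$ together with the transversal element $u_j$ guarantees each Newton polytope $\conv(A_j)$ contains the segment $[0,u_j]$, so the Minkowski sum $\sum_j \conv(A_j)$ is full-dimensional in the relevant coordinates and the mixed volume is positive; Bernstein's theorem then gives the needed point on the variety, and the induction reduces the case $n>k$ to $n=k$. Care is needed to ensure the specialization of free variables keeps the supports from collapsing, but since we only need one good specialization and the bad locus is a proper closed subset, a generic choice works.
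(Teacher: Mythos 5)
Your proof is correct, but it takes a genuinely different route from the paper. The paper's proof is short because it outsources the hard work: after reducing (via Nullstellensatz) to asking whether the generic system has a root in the torus, it cites codimension formulas for the sparse resultant variety from Sturmfels and from the tropical-resultants paper, plus Perfect's generalization of Hall's theorem for the equivalence of (1) and (2). You instead give a self-contained argument: Rado's theorem for (1)$\Leftrightarrow$(2) (essentially the same as Perfect), an incidence-variety dimension count for the ``only if'' direction, and for the ``if'' direction a reduction to $n=k$ by generic specialization of free variables followed by Bernstein's theorem, using the fact that each $\conv(A_j)$ contains the segment $[0,e_j]$. Your version buys a proof one can verify without tracking down resultant codimension theorems; the paper's version buys brevity and ties the lemma to a body of known results. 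One small imprecision in yours: ``the Minkowski sum is full-dimensional'' is not by itself a sufficient criterion for positive mixed volume (e.g.\ a square and a point in $\RR^2$ have full-dimensional sum but zero mixed volume); what you actually need, and what your setup delivers since $\conv(A_j)\supseteq[0,e_j]$, is the Minkowski nondegeneracy condition $\dim\sum_{j\in J}\conv(A_j)\geq|J|$ for every nonempty $J\subseteq[k]$. With that phrasing fixed, the argument is sound.
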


\begin{proof}
Since $\kk$ is algebraically closed, by Hilbert's Nullstellensatz, the ideal $\langle f_1,\dots,f_k \rangle$ in $R$ is proper if and only if  the system of equations $f_1 = \cdots = f_k = 0$ has a solution in $(\kk^*)^n$.  This happens for general $f_1,\dots,f_k$ if and only if the sparse toric resultant of $(A_1,\dots,A_k)$ is identically zero, or equivalently the resultant variety has codimension zero.  The conditions (1) and (2) follow from the codimension formulas from~\cite[Theorem 1.1]{Sturmfels94} and~\cite[Theorem 2.23]{tropRes} respectively. The equivalence of (1) and (2) also follows directly from Perfect's generalization of Hall's marriage theorem~\cite{Perfect}, which has an easy combinatorial proof, so in fact we only need one of~\cite{Sturmfels94, tropRes}.
\end{proof}

We say that the collection $(A_1,\dots,A_k)$ contains an {\em independent transversal} if it satisfies (1) or (2) above. 
When $k=n$, this is precisely the condition for the mixed volume of $\conv(A_1),\dots,\conv(A_k)$ to be non-zero.  
%Although straightforward verification of condition (1) takes exponential time in the input size, the condition (2) can be checked in polynomial time using the maximum cardinality matroid intersection algorithm~\cite[Theorem 2.29]{tropRes}.  

\medskip

Let us now turn to the question of primeness.
%If for some subset $J \subseteq [k]$ the span of $\bigcup_{j \in J} A_j$ has dimension at most $|J|$, then after a monomial change of coordinates the set of polynomials $\{f_j : j\in J\}$ is a system of $|J|$ polynomials in at most $|J|$ variables, so the ideal it generates may fail to be prime and proper.
We first introduce a combinatorial condition stronger than having an independent transversal.

\begin{lemma-def}
\label{lem:dragon}
The following  are  equivalent for any $A_1,\dots,A_k \subset \ZZ^n$. 
\begin{enumerate}
\item For every non-empty subset $J \subseteq [k]$, $\dim \spann \bigcup_{j\in J}
  A_j \geq |J|+1$.
\item For every $1 \leq j \leq k$, there exist $v_1 \in
  A_1,\dots,v_{j-1}\in A_{j-1}$, and $u_1,u_2 \in A_j$ such that $v_1,\dots,v_{j-1},u_1,u_2$ are linearly independent.
\end{enumerate}
We will call this the \emph{dragon marriage independent
transversal}  condition, or  DMIT for short.
\end{lemma-def}
When the disjoint union of $A_1,\dots,A_k$ is linearly independent,
we obtain the dragon marriage condition of Postnikov~\cite{Postnikov}.

\begin{proof}
For every non-empty $J \subseteq [k]$, by taking $j$ to be the largest
element in $J$, we get $(2)\Rightarrow(1)$.
For the converse, let $u$ be any non-zero element in $A_j$ and let $\phi$ be a linear projection of $\QQ^n$ onto an
$n-1$ dimensional subspace such that $\phi(u) = 0$.  By the
condition $(1)$, we have $\dim \spann \sum_{j\in J}
  \phi(A_j)\geq |J|$ for all non-empty $J \subseteq [k]$. By equivalence of (1) and (2) in Lemma~\ref{lem:proper}, there exist
  $v_1 \in A_1,\dots, v_j \in A_j$ such that
  $\phi(v_1),\dots,\phi(v_j)$ are linearly independent.  It follows
  that $v_1,\dots,v_j,u$ are linearly independent.
\end{proof}

We can now prove the main theorem:

\begin{theorem}
\label{thm:main}
Suppose $1 \leq k \leq n$, $A_1,\dots,A_k \subset \ZZ^n$ and $0 \in A_j$ for all $j$.  General polynomials $f_1,\dots,f_k$ with supports $A_1,\dots,A_k$ generate a proper ideal whose radical is prime in $R$ if and only if for every non-empty subset $J \subseteq [k]$ one of the following holds:
\begin{enumerate}
\item $\dim \spann \bigcup_{j\in J} A_j \geq |J|+1$, or
\item $\dim \spann \bigcup_{j\in J} A_j = |J|$ and the mixed volume of $(\conv(A_j))_{j\in J}$ is~$1$.
\end{enumerate}
If $\kk$ has characteristic $0$, then ``whose radical is prime'' can be replaced with ``is prime''.
\end{theorem}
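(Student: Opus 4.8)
The plan is to prove necessity by contraposition and sufficiency by an induction that first strips off a ``tight'' block of the supports and then reduces to the dragon‑marriage situation, where a toric Bertini argument takes over. I will freely use Lemma~\ref{lem:proper} and the Bernstein--Kushnirenko--Khovanskii (BKK) theorem: a general system $f_1=\cdots=f_m=0$ with supports $A_1,\dots,A_m\subset\ZZ^m$ has exactly $M:=\mathrm{MV}(\conv A_1,\dots,\conv A_m)$ zeros in $(\kk^*)^m$, all simple. \emph{Necessity.} Suppose the condition fails for some nonempty $J$. If $\dim\spann\bigcup_{j\in J}A_j<|J|$, then by Lemma~\ref{lem:proper} general $f_j$ already generate the unit ideal. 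Otherwise $J$ is tight ($\dim\spann\bigcup_{j\in J}A_j=|J|$) with $\mathrm{MV}((\conv A_j)_{j\in J})\ge 2$. Reindex $J=\{1,\dots,m\}$ and apply an element of $\mathrm{GL}_n(\ZZ)$ carrying the saturated lattice $\spann_\QQ\bigcup_{j\le m}A_j\cap\ZZ^n$ to $\ZZ^m\times 0$; then $f_1,\dots,f_m$ involve only $x_1,\dots,x_m$, and by BKK a general such system has $M\ge 2$ distinct zeros $q^{(1)},\dots,q^{(M)}\in(\kk^*)^m$, so $V(f_1,\dots,f_k)=\bigsqcup_{l=1}^{M}(\{q^{(l)}\}\times W_l)$, where $W_l\subseteq(\kk^*)^{n-m}$ is cut out by the restrictions of $f_{m+1},\dots,f_k$ to $x_i=q^{(l)}_i$. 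For general coefficients each such restriction is a general Laurent polynomial with support $\pi(A_j)$, $\pi\colon\ZZ^n\to\ZZ^n/(\ZZ^m\times 0)$; since $(A_1,\dots,A_k)$ has an independent transversal so does $(\pi(A_j))_{m<j\le k}$, hence by Lemma~\ref{lem:proper} every $W_l$ is nonempty. Thus $V(f_1,\dots,f_k)$ is a disjoint union of $M\ge 2$ nonempty closed sets, so it is disconnected; its radical is an intersection of $\ge 2$ primes and so is not prime, and neither is the ideal.

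\emph{Reduction to the DMIT case.} Now assume the condition holds. Pick a maximal tight set $J^*$ (possibly empty) and reindex $J^*=\{1,\dots,m\}$; the hypothesis gives $\mathrm{MV}((\conv A_j)_{j\le m})=1$. Change coordinates as above so $A_1,\dots,A_m\subset\ZZ^m\times 0$; by BKK a general $f_1,\dots,f_m$ has a single zero $q\in(\kk^*)^m$, and since $\langle f_1,\dots,f_m\rangle$ is then a zero‑dimensional complete intersection of degree $1$ it equals $\langle x_1-q_1,\dots,x_m-q_m\rangle$. Hence $R/\langle f_1,\dots,f_m\rangle\cong\kk[x_{m+1}^{\pm1},\dots,x_n^{\pm1}]$ and $f_{m+1},\dots,f_k$ become general Laurent polynomials $g_{m+1},\dots,g_k$ with supports $\pi(A_{m+1}),\dots,\pi(A_k)$. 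For nonempty $J'\subseteq\{m+1,\dots,k\}$ the set $J'\cup J^*$ strictly contains the maximal tight set, hence is not tight, so $\dim\spann\bigcup_{j\in J'\cup J^*}A_j\ge|J'\cup J^*|+1$; subtracting $m=\dim\spann\bigcup_{j\le m}A_j$ gives $\dim\spann\bigcup_{j\in J'}\pi(A_j)\ge|J'|+1$. So $(\pi(A_{m+1}),\dots,\pi(A_k))$ satisfies the DMIT condition of Lemma--Definition~\ref{lem:dragon}, and $\langle f_1,\dots,f_k\rangle$ is prime (resp.\ has prime radical) iff $\langle g_{m+1},\dots,g_k\rangle$ is. It therefore suffices to show: if $(B_1,\dots,B_r)\subset\ZZ^N$ satisfies DMIT (forcing $r\le N-1$) and $0\in B_j$, then general $g_1,\dots,g_r$ generate a proper complete intersection ideal of codimension $r$ whose radical is prime, and in characteristic $0$ the ideal is prime.

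\emph{The DMIT case.} I would induct on $r$, adding one hypersurface at a time. For $r=1$, DMIT says $\dim\conv B_1\ge 2$, and a general $g_1$ with support $B_1$ has irreducible zero locus in the torus: this is Bertini irreducibility for a general member of the base‑point‑free system spanned by $\{x^a:a\in B_1\}$ on the projective toric variety of $\conv B_1$, whose morphism has $\ge 2$‑dimensional image — valid in every characteristic — while in characteristic $0$ generic smoothness makes the member reduced, so $\langle g_1\rangle$ is prime. For $r\ge 2$, by induction $X:=V(g_1,\dots,g_{r-1})$ is irreducible of codimension $r-1$, hence $\dim X=N-r+1\ge 2$; a general $g_r$ does not vanish on $X$ (plugging $c=e_a$ shows not every coefficient vector kills $x^a|_X\ne 0$), so $X\cap V(g_r)$ has codimension $r$, and it remains to see it is irreducible. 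Realizing $X\cap V(g_r)$ as the preimage under the monomial map $\psi_{B_r}\colon X\dashrightarrow T_{B_r}\hookrightarrow\mathbb P^{|B_r|-1}$ of a general hyperplane, Bertini irreducibility (together with the Lefschetz theorem for $\pi_1$ applied to the Stein factorization of $\psi_{B_r}|_X$) gives irreducibility provided $\dim\overline{\psi_{B_r}(X)}\ge 2$. A general fibre of $\psi_{B_r}|_X$ is $V(\bar g_1,\dots,\bar g_{r-1})$ inside a coset of $\ker\psi_{B_r}$, where $\bar g_j$ has support the image $\bar B_j$ of $B_j$ modulo $\spann_\QQ B_r\cap\ZZ^N$; a dimension count, using DMIT for $(B_j)_{j\in J'\cup\{r\}}$ to bound $\dim\spann\bigcup_{j\in J'}\bar B_j$ from below, then yields $\dim\overline{\psi_{B_r}(X)}\ge 2$. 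In characteristic $0$ generic smoothness again makes each successive intersection reduced, so $\langle g_1,\dots,g_r\rangle$ is prime.

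\emph{The main obstacle} is the toric Bertini step just sketched: a general sparse hypersurface need not cut an arbitrary irreducible subvariety of a torus in an irreducible one — the relevant linear system can be ``composed with a pencil'' in a way invisible to the polytope alone — and DMIT is precisely the combinatorial input that rules this out. The bookkeeping is delicate when $\dim\spann B_r\ge 3$ and the quotient supports $\bar B_j$ fail to have an independent transversal; there one probably must regroup indices rather than peel them off singly, or invoke a toric Bertini theorem directly. Finally, the split between ``radical is prime'' and ``is prime'' reflects exactly that Bertini irreducibility is characteristic‑free whereas Bertini reducedness (generic smoothness) is special to characteristic $0$: in characteristic $p$ the general fibre can acquire multiplicity $p$ as a Frobenius power, which is the mechanism behind obstructions (5) and (5').
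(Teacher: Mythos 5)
Your necessity argument and your reduction to the DMIT case match the paper's proof in substance: same use of BKK to split off the tight block, same maximality argument to show the projected supports satisfy DMIT, same observation that the single root in mixed volume $1$ lets you substitute and pass to the quotient torus.

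The genuine gap is exactly where you flag it, in the DMIT induction step, and you do not close it. You propose to bound $\dim\overline{\psi_{B_r}(X)}$ from below by estimating the dimension of a general fibre of $\psi_{B_r}|_X$ ``using DMIT for $(B_j)_{j\in J'\cup\{r\}}$,'' but you never carry out this count, and you concede that ``the bookkeeping is delicate'' and that ``one probably must regroup indices.'' This is not a complete proof: the fibre is cut out by restrictions of $g_1,\dots,g_{r-1}$ to a coset of $\ker\psi_{B_r}$, and the supports of those restrictions are not simply controlled by projections of $B_j$ --- one would need to argue that the fibre has dimension at most $\dim X - 2$, and your DMIT bound is only asserted, not derived. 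You also invoke more machinery than is needed (Lefschetz on $\pi_1$, Stein factorization); Jouanolou's Bertini theorem (the one the paper cites) already gives irreducibility of $\alpha^{-1}(H_u)$ once $X$ is irreducible and $\dim\overline{\alpha(X)}\geq 2$.

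The paper's route around this is short and avoids fibre dimensions entirely. DMIT, specialized to $j=k$, produces $v_1\in A_1,\dots,v_{k-1}\in A_{k-1}$ and $u_1,u_2\in A_k$ that are linearly independent. Hence the augmented collection $(A_1,\dots,A_{k-1},\{u_1,0\},\{u_2,0\})$ has an independent transversal. By Lemma~\ref{lem:proper}, for general $(c_1,c_2)\in\kk^2$ the system $f_1=\cdots=f_{k-1}=x^{u_1}-c_1=x^{u_2}-c_2=0$ has a solution in $(\kk^*)^n$, i.e.\ the map $X\to\kk^2$, $x\mapsto(x^{u_1},x^{u_2})$ is dominant. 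Since this map factors through $\alpha$ via a coordinate projection, $\dim\overline{\alpha(X)}\geq 2$, and Jouanolou's Bertini theorem finishes the inductive step (part (1) for irreducibility in any characteristic, part (2) for smoothness --- hence reducedness and primeness --- in characteristic $0$). You should replace your fibre-dimension sketch with this argument; the rest of your proposal is sound.
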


% Note that the condition depends only on the Newton polytopes of the polynomials, i.e.\ the convex hulls of the supports. When $k=1$, this shows that a general polynomial is irreducible if and only if the Newton polytope is either at least two dimensional or a line segment of lattice length $1$.  The DMIT condition is stronger than the condition in the theorem, so if $(A_1,\dots,A_k)$, satisfy DMIT, the general polynomials generate a proper prime ideal.

\begin{remark}
In characteristic $p$ it is not enough to assume that the monomials are not all $p$th powers modulo monomial factors.  See example (5') in the Introduction.
\end{remark}

% \begin{lemma}
% \label{lem:dimension}
% Suppose $A_1,\dots,A_{k-1}$  satisfy the DMIT condition.  Then the image
% of the stable intersection $\trop(A_1) \stint \cdots \stint \trop(A_{k-1})$ under the linear
% map $A : \RR^n \rightarrow \RR^{|A_k|}$ given by $x \mapsto (a \cdot x)_{a \in A_k}$ is at least two-dimensional.
% \end{lemma}

% \begin{proof}

% \end{proof}

\begin{proof}
We will first check that the condition is necessary.  If for some non-empty $J \subseteq [k]$ we have $\dim \spann \bigcup_{j \in J} A_j < |J|$, then the general polynomials generate the unit ideal by Lemma~\ref{lem:proper}.  

Now suppose $\dim \spann \bigcup_{j \in J}A_j \geq |J|$ for all $J \subseteq [k]$, but there is an $K \subseteq [k]$ for which $\dim \spann \bigcup_{j \in K} A_j = |K|$ and the mixed volume of $(\conv(A_j))_{j\in K}$ is greater than $1$.  After  a monomial change of coordinates, we may assume that $\spann \bigcup_{j \in K}A_j$ is a coordinate subspace of $\QQ^n$ and that the polynomials $(f_j)_{j \in K}$ involve only the variables $x_1,\dots,x_{|K|}$.  Then by the Bernstein–-Khovanskii–-Kushnirenko theorem~\cite{Bernstein}, general polynomials $(f_j)_{j \in K}$ have more than one, but finitely many, distinct common roots in $(\kk^*)^{|K|}$. Then
\begin{align*}\langle f_1,\dots,f_k \rangle &= \langle f_j : j \in K \rangle + \langle f_j : j \notin K \rangle \\ &= (I_1 \cap \cdots \cap I_m) + \langle f_j : j \notin K \rangle \\ &= (I_1 + \langle f_j : j \notin K\rangle) \cap \cdots \cap (I_m + \langle f_j : j \notin K\rangle)
\end{align*}
where $m>1$ is the mixed volume of $(\conv(A_j))_{j \in K}$ and $I_1,\dots,I_m$ are the ideals defining the $m$ distinct common roots of $f_1,\dots,f_k$. For each of these roots in $(\kk^*)^{|K|}$, substituting it into the $x_1,\dots,x_{|K|}$ coordinates of $(f_j)_{j \notin K}$ gives general polynomials in the remaining coordinates that generate a proper ideal, because they contain an independent transversal as in Lemma~\ref{lem:proper}.  This shows that the ideals $I_i + \langle f_j : j \notin K\rangle$ for $1\leq i \leq m$ are proper and their zero sets do not contain each other; thus the radical of $\langle f_1,\dots,f_k \rangle$ is not prime.

\medskip

For the converse, suppose $K$ is the largest (possibly empty) subset of $[k]$ satisfying (2).  As above, we may assume that $(f_j)_{j\in K}$ involve only the variables $x_1,\dots,x_{|K|}$.  Since the mixed volume is $1$, the polynomials $(f_j)_{j \in K}$ have a unique common root in $(\kk^*)^{|K|}$.  Then the ideal $\langle f_1,\dots,f_k \rangle$ is isomorphic to the ideal generated by $(f_j)_{j \notin K}$ after substituting this common root into variables $x_1,\dots,x_{|K|}$.  This substitution gives general polynomials whose supports are linear projections of $(A_j)_{j \notin K}$ onto the remaining coordinates.  Every non-empty collection of the projected supports satisfy condition (1).  Otherwise the union of such a collection and $K$ would be a larger set satisfying (2).  Thus we have reduced the problem to the case when $(A_1,\dots,A_k)$ satisfies DMIT.

We now recall a version of Bertini theorem from~\cite[Theorem 6.3 (4)]{Jouanolou}. For $u = (u_0,u_1,\dots,u_n) \in \kk^{n+1}$, let $H_u$ be the hyperplane in $\kk^n$ defined by $u_0 + u_1 y_1 + \cdots c_n y_n = 0$ where $y_i$'s are coordinates in $\kk^n$.  The theorem says, for the case when $\kk$ is algebraically closed:

%\newpage

\begin{quote}Let $X$ be a $\kk$-scheme of finite type; $\alpha : X \rightarrow \kk^n$ be a $\kk$-morphism.  
\begin{enumerate}
\item If $\dim \overline{\alpha(X)} \geq 2$ and $X$ is irreducible, then  $\alpha^{-1}(H_u)$ is irreducible for general \mbox{$u\in\kk^{n+1}$}.
\item If $\kk$ has characteristic zero and $X$ is smooth, then $\alpha^{-1}(H_u)$ is smooth for general  \mbox{$u\in\kk^{n+1}$}.\footnote{For example, in characteristic $p>2$ the curves in $\kk^2$ defined by $a x^p + b y^2 + c y + d = 0$ are not smooth for general coefficients $a,b,c,d$~\cite{Zariski}. The morphism $\alpha$ in this case is $\alpha : \kk^2 \rightarrow \kk^3$ given by $(x,y) \mapsto (x^p, y^2,y)$.}
\end{enumerate} 
\end{quote}

Let $I = \langle f_1, \dots, f_{k-1}\rangle$. If $k=1$, let $I$ be the zero ideal.   Let $X$ be the variety of $I$ in $(\kk^*)^n$. By induction, we may assume that $X$ is irreducible. 
Consider the monomial map $\alpha : X \rightarrow \kk^{|A_k|}$ given by $\alpha(x) = (x^{a_1},\dots,x^{a_m})$ where $A_k = \{a_1,\dots,a_m\}$.
We need to show that the image of $X$ under $\alpha$ has dimension at least two. 

Since $(A_1,\dots,A_k)$ satisfies the DMIT condition, there exist $v_1\in A_1, \dots$, $v_{k-1}\in A_{k-1}$ and $u_1,u_2 \in A_k$ that are linearly independent.  Therefore the collection $(A_1,\dots,A_{k-1}$, $\{u_1,0\}$, $\{u_2,0\})$ contains an independent transversal. By Lemma~\ref{lem:proper}, for general $(c_1,c_2) \in \kk^2$, the system $f_1 = \cdots =f_{k-1} = x^{u_1}-c_1 = x^{u_2}-c_2 = 0$ has a solution in $(\kk^*)^n$.  It follows that the image of the map $X \rightarrow \kk^2$ given by $x \mapsto (x^{u_1},x^{u_2})$ is dense in $\kk^2$.   Thus $\dim \overline{\alpha(X)} \geq 2$. 

Then by the Bertini theorem (1) above, $\alpha^{-1}(H_u) \subset X$ is irreducible for general hyperplanes $H_u$.  But $\alpha^{-1}(H_u)$ is precisely the intersection of $X$ with the hypersurface in $(\kk^*)^n$ defined by $f_k = 0$.
Note that we need the assumption that $A_k$ contains $0$ because general hyperplanes $H_u$ are defined by equations with a constant term $u_0$.

In characteristic zero, by induction and Bertini theorem (2) above,  $\alpha^{-1}(H_u) \subset X$ is also smooth.  It follows that the ideal $\langle f_1,\dots,f_k \rangle$ is prime.
\end{proof}

\section{Stable intersection of tropical hypersurfaces}
\label{sec:tropical}
I want to point out, to readers familiar with tropical geometry, a consequence of the main theorem on connectivity of stable intersection of tropical hypersurfaces.  Please refer to~\cite{MaclaganSturmfels, stableIntersection} for background on tropical hypersurfaces and their stable intersections.

\begin{corollary}
\label{cor:connected}
Suppose the collection $(A_1,\dots,A_k)$ satisfies the condition in Theorem~\ref{thm:main}.  Then for any tropical polynomials with corresponding supports, the stable intersection of their tropical hypersurfaces is connected through codimension one.
\end{corollary}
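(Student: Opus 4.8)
The plan is to transfer the algebraic statement of Theorem~\ref{thm:main} to the tropical world via a standard field-theoretic lifting argument. First I would pass from $\kk$ to an algebraically closed field $\mathbb{K}$ with a nontrivial valuation (for instance $\CC\{\{t\}\}$, the field of Puiseux series) so that the tropicalization machinery of~\cite{MaclaganSturmfels} applies. The key observation is that a generic choice of tropical coefficients can be realized as the valuations of a generic choice of coefficients in $\mathbb{K}$: given tropical polynomials $\trop(f_1),\dots,\trop(f_k)$ with supports $A_1,\dots,A_k$, lift them to Laurent polynomials $f_1,\dots,f_k \in \mathbb{K}[x_1^{\pm 1},\dots,x_n^{\pm 1}]$ whose coefficients have the prescribed valuations and are otherwise generic. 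Since the Zariski-open dense locus where Theorem~\ref{thm:main} applies is defined over the prime field, such a generic lift lands in it, so $\langle f_1,\dots,f_k\rangle$ has prime radical; let $V \subset (\mathbb{K}^*)^n$ be the (irreducible) variety it defines.

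The second step is to identify $\trop(V)$ with the stable intersection. By the Fundamental Theorem of tropical geometry, $\trop(V(f_j))$ is the tropical hypersurface of $\trop(f_j)$, and by the Structure Theorem (e.g.~\cite[Theorem 3.3.5]{MaclaganSturmfels}), $\trop(V)$ is a connected-through-codimension-one balanced weighted polyhedral complex, pure of dimension $n-k$ when $V$ is an irreducible variety of that dimension (which holds here: DMIT-type conditions force $\dim V = n-k$, since at each inductive stage the hypersurface cut is proper and drops the dimension by exactly one). I would then invoke the result of~\cite{stableIntersection} that for \emph{generic} coefficient lifts, the tropicalization of the intersection equals the stable intersection of the tropicalizations: $\trop(V) = \trop(V(f_1)) \cap_{\mathrm{st}} \cdots \cap_{\mathrm{st}} \trop(V(f_k))$. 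Because the stable intersection depends only on the tropical polynomials (not on the chosen lift), this gives the desired conclusion for the originally given tropical polynomials.

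The main obstacle, and the point deserving the most care, is making precise the claim that a generic lift of generic tropical data realizes the stable intersection \emph{and} lands in the good locus of Theorem~\ref{thm:main} simultaneously. Both conditions are Zariski-open and dense in the appropriate coefficient space, but one must check that the stable-intersection-realizing locus is nonempty within the fibers over the prescribed valuations — this is exactly the content of the genericity statement in~\cite{stableIntersection} and holds over any algebraically closed valued field of sufficiently large transcendence degree. A minor additional point is the reduction in Theorem~\ref{thm:main}'s proof when some $K \subseteq [k]$ satisfies condition~(2) with mixed volume $1$: there the variety is cut out after substituting a unique common root, and one should note that on the tropical side this corresponds to the stable intersection along the lineality directions spanned by $\bigcup_{j \in K} A_j$, so connectivity through codimension one is preserved. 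Once these compatibility points are settled, the corollary follows immediately.
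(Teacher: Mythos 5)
Your proposal is correct and follows essentially the same route as the paper: realize the stable intersection as the tropicalization of a generic lift with the prescribed supports, invoke Theorem~\ref{thm:main} to get irreducibility of the lifted variety, and conclude connectivity through codimension one from the structure theory of tropicalizations of irreducible varieties (the paper cites the result of Cartwright--Payne directly, which is what underlies the Structure Theorem you cite). The final paragraph tracking the mixed-volume-one reduction through the tropical side is unnecessary, since once irreducibility of $V$ and $\trop(V) = \trop(V(f_1)) \cap_{\mathrm{st}} \cdots \cap_{\mathrm{st}} \trop(V(f_k))$ are in hand the conclusion is immediate.
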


\begin{proof}
The stable intersection of tropical hypersurfaces is the tropicalization of an ideal generated by some general polynomials with the given Newton polytope~\cite{OssermanPayne, stableIntersection}, over any algebraically closed field. By Theorem~\ref{thm:main} above, the zero set of this ideal is irreducible.  The result then follows from~\cite{CartwrightPayne}.
\end{proof}

This result has applications in computational tropical geometry, where we need connectivity for graph traversal algorithms, such as those in~\cite{gfan, Malajovich}, to work correctly.
In the constant-coefficient case, when the tropical hypersurfaces are fans, the stable intersection is connected through codimension one without any assumption on the supports~\cite{tropRes, stableIntersection}. 

\section{Open Problems}

What are precise combinatorial conditions for primeness in positive characteristic?  See Example (5') in the Introduction.

Although Corollary~\ref{cor:connected} is a purely combinatorial statement about mixed subdivisions of polytopes and their dual complexes, I do not know of a combinatorial proof, even for the case of two tropical hypersurfaces in $\RR^3$.  It will be interesting to find a direct proof and also to study combinatorial and topological properties of the stable intersection, such as shellability.

 The DMIT condition is checkable in polynomial time in the size of the input $(A_1,\dots,A_k)$, as follows. By the proof of Lemma~\ref{lem:dragon}, the DMIT condition holds if and only if for every $j \in [k]$ and
 for every $u \in A_j$, $(\phi_u(A_1), \dots, \phi_u(A_j))$ contains an independent transversal, where
 $\phi_u$ is a linear projection from $\QQ^n$ to an $n-1$ dimensional subspace such that $\phi_u(u) = 0$.  The existence of independent
 transversal can be checked in polynomial time using matroid
 intersection as in~\cite[Theorem 2.29]{tropRes}.  
I do not know whether the condition of Theorem~\ref{thm:main}, which is weaker than DMIT, is checkable in polynomial time.  In particular, can we efficiently check whether the mixed volume is equal to $1$?

We can study the (Zariski closure of) the locus of coefficients for which smoothness, reducedness, or irreducibility fails.  For the case when the number of polynomials is equal to the number of variables, the non-smooth locus is the mixed discriminant studied in~\cite{CCDDS}, which conincides with the $A$-discriminant of the Cayley sum of $A_1,\dots,A_k$.

\section*{Acknowledgments}
I thank Florian Enescu, Anton Leykin, Bernd Sturmfels, and Charles Wang for helpful discussions, Carlos D'Andrea for pointing me to the reference~\cite{Jouanolou}, and Dustin Cartwright and a referee 
for pointing out an error in the earlier version of the paper.  I acknowledge partial support from NSF-DMS Grant \#1101289.

\bibliographystyle{amsalpha}
\bibliography{mybib}

\newcommand{\etalchar}[1]{$^{#1}$}
\def\cprime{$'$}
\providecommand{\bysame}{\leavevmode\hbox to3em{\hrulefill}\thinspace}
\providecommand{\MR}{\relax\ifhmode\unskip\space\fi MR }
% \MRhref is called by the amsart/book/proc definition of \MR.
\providecommand{\MRhref}[2]{%
  \href{http://www.ams.org/mathscinet-getitem?mr=#1}{#2}
}
\providecommand{\href}[2]{#2}
\begin{thebibliography}{CCD{\etalchar{+}}13}

\bibitem[Ber75]{Bernstein}
D.~N. Bernstein, \emph{The number of roots of a system of equations},
  Funkcional. Anal. i Prilo\v zen. \textbf{9} (1975), no.~3, 1--4. \MR{0435072
  (55 \#8034)}

\bibitem[CCD{\etalchar{+}}13]{CCDDS}
Eduardo Cattani, Mar{\'{\i}}a~Ang{\'e}lica Cueto, Alicia Dickenstein, Sandra
  Di~Rocco, and Bernd Sturmfels, \emph{Mixed discriminants}, Math. Z.
  \textbf{274} (2013), no.~3-4, 761--778. \MR{3078246}

\bibitem[CP12]{CartwrightPayne}
Dustin Cartwright and Sam Payne, \emph{Connectivity of tropicalizations}, Math.
  Res. Lett. \textbf{19} (2012), no.~5, 1089--1095. \MR{3039832}

\bibitem[Jen]{gfan}
Anders~N. Jensen, \emph{{G}fan, a software system for {G}r{\"o}bner fans and
  tropical varieties}, Available at
  \url{http://home.imf.au.dk/jensen/software/gfan/gfan.html}.

\bibitem[Jou83]{Jouanolou}
Jean-Pierre Jouanolou, \emph{Th\'eor\`emes de {B}ertini et applications},
  Progress in Mathematics, vol.~42, Birkh\"auser Boston, Inc., Boston, MA,
  1983. \MR{725671 (86b:13007)}

\bibitem[JY13]{tropRes}
Anders Jensen and Josephine Yu, \emph{Computing tropical resultants}, J.
  Algebra \textbf{387} (2013), 287--319. \MR{3056698}

\bibitem[JY15]{stableIntersection}
Anders~N. Jensen and Josephine Yu, \emph{Stable intersection of tropical
  varieties}, Journal of Algebraic Combinatorics, DOI
  10.1007/s10801-015-0627-9, 2015.

\bibitem[Mal14]{Malajovich}
Gregorio Malajovich, \emph{Computing mixed volume and all mixed cells in
  quermassintegral time}, arXiv:1412.0480, 2014.

\bibitem[MS15]{MaclaganSturmfels}
Diane Maclagan and Bernd Sturmfels, \emph{Introduction to {T}ropical
  {G}eometry}, Graduate Studies in Mathematics, vol. 161, American Mathematical
  Society, Providence, RI, 2015.

\bibitem[OP13]{OssermanPayne}
Brian Osserman and Sam Payne, \emph{Lifting tropical intersections}, Doc. Math.
  \textbf{18} (2013), 121--175. \MR{3064984}

\bibitem[Per69]{Perfect}
Hazel Perfect, \emph{A generalization of {R}ado's theorem on independent
  transversals}, Proc. Cambridge Philos. Soc. \textbf{66} (1969), 513--515.
  \MR{0244065 (39 \#5382)}

\bibitem[Pos09]{Postnikov}
Alexander Postnikov, \emph{Permutohedra, associahedra, and beyond}, Int. Math.
  Res. Not. IMRN (2009), no.~6, 1026--1106. \MR{2487491}

\bibitem[Stu94]{Sturmfels94}
Bernd Sturmfels, \emph{On the {N}ewton polytope of the resultant}, J. Algebraic
  Combin. \textbf{3} (1994), no.~2, 207--236. \MR{1268576 (95j:52024)}

\bibitem[Zar44]{Zariski}
Oscar Zariski, \emph{The theorem of {B}ertini on the variable singular points
  of a linear system of varieties}, Trans. Amer. Math. Soc. \textbf{56} (1944),
  130--140. \MR{0011572 (6,186a)}

\end{thebibliography}
 \end{document}